\documentclass[11pt]{amsart}
\usepackage{amsmath, amssymb, graphicx, geometry, xr}
\graphicspath{{.}{figs/}}

\usepackage[percent]{overpic}
\usepackage{varioref}
\usepackage{placeins}
\usepackage[matrix,arrow]{xy}
\usepackage{array}
\usepackage{longtable}
\usepackage{dcolumn}
\usepackage{booktabs}
\usepackage{fullpage}
\usepackage{epic,eepic}
\usepackage{fp}
\usepackage{units}
\usepackage{url}
\usepackage{hyperref}
\usepackage{color}

\usepackage{times}

\theoremstyle{plain}
\newtheorem{theorem}{Theorem}[section]

\theoremstyle{definition}
\newtheorem{definition}[theorem]{Definition}

\theoremstyle{definition}
\newtheorem{example}[theorem]{Example}

\theoremstyle{plain}
\newtheorem{proposition}[theorem]{Proposition}

\theoremstyle{plain}
\newtheorem{lemma}[theorem]{Lemma}

\theoremstyle{plain}

\theoremstyle{plain}
\newtheorem{conjecture}[theorem]{Conjecture}

\theoremstyle{plain}

\theoremstyle{plain}

\newcommand\cross{\times}
\newcommand{\MCG}{\operatorname{MCG}}
\newcommand{\Aut}{\operatorname{Aut}}
\newcommand{\Sym}{\operatorname{Sym}}
\newcommand{\Z}{\mathbf{Z}}
\newcommand{\id}{e}  
\newcommand{\s}[2]{\Sigma_{#1,#2}}
\newcommand{\co}{\operatorname{:}}

\newcommand{\PI}{\operatorname{PI}}


\everymath{\displaystyle}

\begin{document}
\thispagestyle{empty}
\title{The 27 Possible Intrinsic Symmetry Groups of Two-Component Links}

\author{Jason Cantarella}

\address{Department of Mathematics, University of Georgia,
Athens, GA 30602}
\email{cantarel@math.uga.edu, mmastin@math.uga.edu}

\author{James Cornish}
\address{Department of Mathematics, Wake Forest University, Winston-Salem, NC 27109}
 \email{cornjs0@wfu.edu, parslerj@wfu.edu}
 
\author{Matt Mastin}

\author{Jason Parsley}

\keywords{two-component links, symmetry group of knot, link symmetry, Whitten group}
\date{Revised: \today}

\begin{abstract}
We consider the ``intrinsic'' symmetry group of a two-component link $L$, defined to be the image $\Sigma(L)$ of the natural homomorphism from the standard symmetry group $\MCG(S^3,L)$ to the product $\MCG(S^3) \cross \MCG(L)$.  This group, first defined by Whitten in 1969, records directly whether $L$ is isotopic to a link $L'$ obtained from $L$ by permuting components or reversing orientations; it is a subgroup of $\Gamma_2$, the group of all such operations.

For two-component links, we catalog the 27 possible intrinsic symmetry groups, which represent the subgroups of $\Gamma_2$ up to conjugacy.  We are able to provide prime, nonsplit examples for 21 of these groups; some are classically known, some are new.  We catalog the frequency at which each group appears among all 77,036 of the hyperbolic two-component links of 14 or fewer crossings in Thistlethwaite's table. We also provide some new information about symmetry groups of the 293 non-hyperbolic two-component links of 14 or fewer crossings in the table.
\end{abstract}

\maketitle

\section{Introduction}

\subsection{Symmetries of Links}
The symmetry group of a link $L$ is a well-studied construction in knot theory.  It is defined to be the mapping class group of the pair $(S^3,L)$ and is frequently denoted $Sym(L)$.  A variety of approaches exist for computing symmetry groups for prime knots and links; see for instance \cite{MR891592, bs, MR1164115, MR1177431}.

This paper considers a different but related group of link symmetries.  The image of the group homomorphism
\begin{equation*}
\pi \co \Sym(L) = \MCG(S^3,L) \rightarrow \MCG(S^3) \cross \MCG(L).
\end{equation*}
represents what we call \emph{intrinsic} link symmetries, where we focus upon the action on $L$ itself and only record the orientation of the ambient $S^3$.  

Without using the language of mapping class groups Whitten \cite{MR0242146}, following ideas of Fox, defined the group structure of $\MCG(S^3) \cross \MCG(L)$ in 1969. They denoted this group as either $\Gamma(L)$ or $\Gamma_\mu$, where $\mu$ is the number of components of $L$.  This group (cf.\ Definition~\ref{def:whittengroup}) can be described as a semidirect product of $\Z_2$ groups recording the orientation of each component of the link $L$ with the permutation group $S_\mu$ representing component exchanges, crossed with another $\Z_2$ recording the orientation of $S^3$:
\begin{equation*}
\Gamma_\mu = \Gamma(L) =  \Z_2 \cross (\Z_2^{\mu}\rtimes S_\mu).
\end{equation*}

An element $\gamma = (\epsilon_0, \epsilon_1, \dots ,\epsilon_\mu, p)$ in $\Gamma_\mu$ acts on $L$ to produce a new link $L^\gamma$.  If $\epsilon_0 = +1$, then $L^\gamma$ and $L$ are the same as sets; however the components of $L$ have been renumbered and reoriented.  If $\epsilon_0 = -1$, the new link $L^\gamma$ is the mirror image of $L$, again with renumbering and reorientation.   We can then define the intrinsic symmetry group $\Sigma(L)$ by
\begin{equation*}
\gamma \in \Sigma(L) \iff \text{ there is an isotopy from } L \text { to } L^\gamma \text{ preserving component numbering and orientation.}
\end{equation*}
We refer to our paper \cite{intrinsicsym} for a thorough description of the history, construction, and applications of intrinsic symmetry groups.  In that paper, we find the intrinsic symmetry groups for all 48 prime links of eight or fewer crossings.

The goal of this paper is to describe how frequently the various possible intrinsic symmetry groups of two-component links occur in examples. There are 27 possible symmetry groups, representing the different subgroups of $\Gamma_2$ up to conjugacy. The subgroups are only considered up to conjugacy as conjugation corresponds to merely  relabeling and reorienting the components of the link; see section~\ref{sec:subgroups}.   Figure~\ref{fig:subgroup_lattice} displays the subgroup lattice of $\Gamma_2$. In Table~\ref{t:examples}, we present prime, nonsplit examples for 21 of these groups; some examples are already known, some are new.  Many new examples were found using the software {\tt SnapPea}; see section~\ref{sec:snappea} for details. We present in Table~\ref{tab:conj} a report on how frequently each symmetry group occurs among the 77,036 two-component hyperbolic links with 14 or fewer crossings.  

In section~\ref{sec:alt}, we restrict to the case of alternating, nonsplit two-component links, for which only 12 of the 27 symmetry groups are possible.  In particular, no alternating link with an even number of components may have full symmetry.  We realize alternating, prime examples for 11 of these 12 groups.

\section{The Whitten Group $\Gamma_2$}

We begin by giving the details of our construction of the Whitten group $\Gamma_\mu$ and the symmetry group~$\Sigma(L)$. Consider operations on an oriented, labeled link $L$ with $\mu$ components. We may reverse the orientation of any of the components of $L$ or permute the components of $L$ by any element of the permutation group~$S_\mu$. However, these operations must interact with each as well: if we reverse component 3 and exchange components 3 and 5, we must decide whether the orientation is reversed before or after the permutation. Further, we can reverse the orientation on the ambient $S^3$ as well, a process which is clearly unaffected by the permutation. To formalize our choices, we follow~\cite{MR0242146} to introduce the Whitten group of a $\mu$-component link.
\begin{definition}
Consider the homomorphism given by
\begin{equation*}
\omega:S_\mu\longmapsto~\operatorname{Aut}(\mathbf{Z}_2^{\mu+1}),\hspace{20pt}p\longmapsto\omega(p)
\end{equation*}
where $\omega(p)$ is defined as
\begin{equation*}
\omega(p)(\epsilon_0,\epsilon_1,\epsilon_2...\epsilon_\mu)=\left(\epsilon_0,\epsilon_{p(1)},\epsilon_{p(2)}...\epsilon_{p(\mu)}\right).
\end{equation*}
For $\gamma=\left(\epsilon_0,\epsilon_1,...\epsilon_\mu, p \right),$ and $\gamma'=\left(\epsilon'_0,\epsilon'_1,...\epsilon'_\mu, q \right)\in \mathbf{Z}_2^{\mu+1}\rtimes_\omega S_\mu$, we define the \emph{Whitten group} $\Gamma_\mu$ as the semidirect product $\Gamma_\mu=\mathbf{Z}_2^{\mu+1}\rtimes_\omega S_\mu$
with the group operation
\begin{align*}
\gamma\ast\gamma'&=\left(\epsilon_0,\epsilon_1,\epsilon_2...\epsilon_\mu,p\right)\ast \left(\epsilon'_0,\epsilon'_1,\epsilon'_2...\epsilon'_\mu,q \right)\\
& =\left((\epsilon_0,\epsilon_1,\epsilon_2...\epsilon_\mu)\cdot\omega(p)(\epsilon'_0,\epsilon'_1,\epsilon'_2...\epsilon'_\mu),qp \right)\\
&=\left(\epsilon_0\epsilon'_0,\epsilon_1\epsilon'_{p(1)},\epsilon_2\epsilon'_{p(2)}...\epsilon_\mu\epsilon'_{p(\mu)}, qp \right) .
\end{align*}
\label{def:whittengroup}
\end{definition}

\subsection{Intrinsic symmetries of a link}
Given a link $L$ consisting of $\mu$ oriented knots in $S^3$, we may order the knots so that
\begin{equation*}
L=K_1\cup K_2\cup \cdots \cup K_\mu.
\end{equation*}
Consider the following operations on $L$:
\begin{enumerate}
\item Permuting the $K_i$.
\item Reversing the orientation of any set of $K_i$'s.
\item Reversing the orientation on $S^3$ (mirroring $L$).
\end{enumerate}
Let $\gamma$ be a combination of any of the moves (1), (2), or (3). We think of $\gamma=(\epsilon_0,\epsilon_1,...\epsilon_\mu, p)$ as an element of the set $\mathbf{Z}_2^{\mu+1}\times S_\mu$ in the following way. Let
\begin{equation*}\epsilon_0=
\begin{cases}
-1, &\text{if $\gamma$ mirrors $L$}\\
+1, &\text{if $\gamma$ does not mirror $L$}
\end{cases}\end{equation*}
and
\begin{equation*}\epsilon_i=
\begin{cases}
-1, &\text{if $\gamma$ reverses the orientation of $K_{p(i)}$}\\
+1, &\text{if $\gamma$ does not reverse the orientation of $K_{p(i)}$}
\end{cases}\end{equation*}
Lastly, let $p\in S_\mu$ be the permutation of the $K_i$ associated to $\gamma$.  To be explicitly clear, permutation $p$ permutes the labels of the components; the component originally labeled $i$ will be labeled $p(i)$ after the action of $\gamma$.  

For each element, $\gamma$ in $\mathbf{Z}_2^{\mu+1}\times S_\mu$, we define
\begin{equation}
\label{eq:Lgamma}
L^\gamma=\gamma(L)=\epsilon_1K_{p(1)}^{(*)}\cup \epsilon_2K_{p(2)}^{(*)}\cup\cdot\cdot\cdot \cup \epsilon_\mu K_{p(\mu)}^{(*)} =\bigcup_{i=1}^{\mu}\epsilon_i K_{p(i)}^{(*)}
\end{equation}
\\where $-K_i$ is $K_i$ with orientation reversed, $K_i^*$ is the mirror image of $K_i$ and the $(*)$ appears above if and only if $\epsilon_0 = -1$. Note that the $i$th component of $\gamma(L)$ is $\epsilon_i K_{p(i)}^{(*)}$ the possibly reversed or mirrored $p(i)$th component of $L$.  Since we are applying $\epsilon_i$ instead of $\epsilon_{p(i)}$ to $K_{p(i)}$ we are taking the convention of first permuting and then reversing the appropriate components.

We can now define the subgroup of $\Gamma(L)$ which corresponds to the symmetries of the link $L$.

\begin{definition}Given a link, $L$ and $\gamma\in \Gamma(L)$, we say that \emph{$L$ admits $\gamma$} when there exists an isotopy taking each component of $L$ to the corresponding component of $L^{\gamma}$ which respects the orientations of the components.  We define as the \emph{intrinsic symmetry group} (also called the \emph{Whitten symmetry group}) of $L$,
\begin{equation*}
\Sigma(L):=\{\gamma\in \Gamma(L) |~\text{$L$ admits $\gamma$}\}.
\end{equation*}
\end{definition}

The intrinsic symmetry group $\Sigma(L)$ is a subgroup of $\Gamma_\mu$, and its left cosets represent the different isotopy classes of links $L^\gamma$ among all symmetries $\gamma$.  By counting the number of cosets, we determine the number of (labeled, oriented) isotopy classes of a particular prime link.  

Next, we provide a few examples of symmetry subgroups.  Recall that the first Whitten group $\Gamma_1 = \Z_2 \times \Z_2$ has order four and that $\Gamma_2 = \Z_2 \cross (\Z_2 \cross \Z_2 \rtimes S_2)$ is a nonabelian 16 element group.

\begin{example} Let $L=3_1$, a trefoil knot. It is well known that $L\sim-L$ and $L^*\sim-L^*$, but $L\nsim L^*$, so we have $\Sigma(3_1)=\{(1,1, \id), (1,-1, e)\}$. This means that the two cosets of $\Sigma(3_1)$ are $\{(1,1,\id), (1,-1, \id)\}$ and $\{(-1,-1, \id), (-1,1,\id)\}$, and there are two isotopy classes of $3_1$ knots.  A trefoil knot is thus \emph{invertible}.
\hfill{$\Diamond$}
\end{example}

\begin{example}
Let $L=8^2_{12}$, whose components are an unknot $K_1$ and a trefoil $K_2$.  Since the components $K_1$ and $K_2$ are of different knot types, we conclude that no symmetry in $\Sigma(8^2_{12})$ can contain the permutation $(12)$. As we describe in \cite{intrinsicsym}, certain symmetries are impossible for alternating links with nonzero self-writhe, such as $8^2_{12}$.  In particular, such a link never admits a symmetry that reverses the ambient orientation, i.e., one with $\epsilon_0=-1$.  Together these two obstructions rule out 12 of the 16 elements in $\Gamma_2$, so we may conclude that $\Sigma(8^2_{12}))$ has order four or less.

It is not difficult to find isotopies that show that $L$ is purely invertible, meaning isotopic to $-L=-K_1\cup-K_2$, and individually invertible in the first component, meaning isotopic to $-K_1\cup K_2$.  Thus, both $(1,-1,-1,e)$ and $(1,-1,1,e)$ are included in the Whitten group for $L$.  Hence, $\Sigma(L)$ is the four element group $\s{4}{2}=\{(1,1,1,e), (1,-1,-1,e), (1,-1,1,e), (1,1,-1,e)\}$.  There are four cosets of this four element group in the 16 element group $\Gamma_2$, so there are four (labeled, oriented) isotopy classes of $8^2_{12}$ links.  
\hfill{$\Diamond$}
\end{example}

\subsection{Notation}
We introduce some notation and names for commonly occurring symmetries.   Let $m = (-1,1,1,e)$ describe a \emph{mirror} of the link and $\rho = (1,1,1,(12))$ denote a \emph{pure exchange} symmetry.  Inverting component $k$ is denoted by $i_k = (1,\varepsilon_1, \varepsilon_2,e)$ where $\varepsilon_k = -1$ and $\varepsilon_{3-k}=1$.  Let $a_k = \rho \ast i_k = (1,\varepsilon_1, \varepsilon_2,(12))$; observe that these elements have order 4.  A link has \emph{pure invertible} symmetry if $\PI = (1,-1,-1,e) \in \Sigma(L)$.  Finally, let $b = \PI \ast \rho = \rho \ast \PI = (1,-1,-1,(12))$.  We shall sometimes use this notation in a product form, e.g., $ma_1 = (-1,-1,1,(12))$.

Henceforth, `symmetry group' will always refer to the intrinsic symmetry group $\Sigma(L)$.  We use the notation from \cite{intrinsicsym} for symmetry groups: $\s{k}{j}$ is the $j$th subgroup of order $k$ in our subgroup lattice for $\Gamma_2$.

\subsection{Subgroups of $\Gamma_2$} \label{sec:subgroups}
In order to classify the possible symmetry groups of two-component links, we must find the subgroups of $\Gamma_2$.  For any $\gamma \in \Gamma_2$, the symmetry groups $\Sigma(L)$ and $\Sigma(L^\gamma)$ are conjugate via $\gamma$.  Therefore if a link $L$ has symmetry group $\Sigma$, which is conjugate to group $\Sigma'$, then there exists some $\gamma \in \Gamma_2$ such that $L^\gamma$ has symmetry group $\Sigma'$; link $L^\gamma$ is merely a relabeled and reoriented copy of $L$.  
 Thus, the truly different symmetry groups are represented by the conjugacy classes of $\Gamma_2$ subgroups.  

\begin{proposition}
There are 35 subgroups of $\Gamma_2$ and 27 mutually nonconjugate subgroups. A maximal set of nonconjugate subgroups consists of 7 subgroups of order two denoted $\s{2}{1}$ through $\s{2}{7}$, 11 subgroups of order four denoted $\s{4}{1}$ through $\s{4}{11}$ and seven subgroups of order eight denoted $\s{8}{1}$ through $\s{8}{7}$, as well as the trivial subgroup and the full group. Generators for these groups and the lattice structure of the subgroups appear in Figure~\ref{fig:subgroup_lattice}.
\end{proposition}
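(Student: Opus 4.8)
The plan is to treat this as a finite computation inside the order-16 group $\Gamma_2 = \Z_2^3 \rtimes_\omega S_2$, organized so that the conjugacy analysis stays manageable. First I would record the two structural features that drive everything else: the center and the abelianization. A direct check of the multiplication rule in Definition~\ref{def:whittengroup} shows that $m$ and $\PI$ commute with every generator, so the center $Z(\Gamma_2)$ contains $\{e, m, \PI, m\PI\}$; verifying that no element whose permutation is $(12)$ is central shows this is all of $Z(\Gamma_2)$, whence $|Z(\Gamma_2)| = 4$ and the inner automorphism group $\Gamma_2/Z(\Gamma_2)$ has order $4$. A short commutator calculation, for instance $[b, i_1] = \PI$, shows the commutator subgroup is exactly $\langle \PI \rangle$, so $\Gamma_2^{\mathrm{ab}} \cong \Z_2^3$.

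Next I would count subgroups by order, using Lagrange to restrict to orders $1,2,4,8$. The trivial subgroup and $\Gamma_2$ itself are immediate. Squaring an element $(\epsilon_0,\epsilon_1,\epsilon_2,p)$ gives $e$ whenever $p = e$ and gives $(1,\epsilon_1\epsilon_2,\epsilon_1\epsilon_2,e)$ when $p=(12)$; hence the involutions are the seven nonidentity elements with $p=e$ together with the four elements with $p=(12)$ and $\epsilon_1=\epsilon_2$, for $11$ involutions and thus $11$ subgroups of order two. The remaining four elements (those with $p=(12)$ and $\epsilon_1 \ne \epsilon_2$) have order $4$; since $a_1^{-1}=a_2$ and $(m a_1)^{-1} = m a_2$, these assemble into exactly two cyclic groups $\langle a_1 \rangle$ and $\langle m a_1\rangle$, each containing $\PI$. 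For order eight, every index-two subgroup is normal and corresponds to a nontrivial homomorphism $\Gamma_2 \to \Z_2$; as $\Gamma_2^{\mathrm{ab}}\cong\Z_2^3$ there are $2^3 - 1 = 7$ of them.

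The one genuinely fiddly count is the order-four Klein subgroups: each is spanned by a commuting pair of involutions and contains three of the eleven involutions, and I would enumerate these pairs directly from the multiplication table. Together with the two cyclic order-four groups this should yield $15$ subgroups of order four, giving the total $1 + 11 + 15 + 7 + 1 = 35$ and confirming the first assertion. For conjugacy I would use that the action factors through the order-four group $\Gamma_2/Z(\Gamma_2)$: all seven order-eight subgroups are normal of index two and so are singletons; the three central involutions $m,\PI,m\PI$ give normal order-two subgroups, while the eight noncentral involutions fuse in pairs into four classes, for $3+4 = 7$ classes in order two; and the fifteen order-four subgroups collapse to $11$ classes. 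The main obstacle is precisely this last bookkeeping --- correctly pairing conjugate subgroups without double-counting --- which is where an error is easiest to make. I would cross-check the entire enumeration in a computer algebra system, and then fix the labels $\s{2}{1},\dots,\s{8}{7}$ together with their generators and the covering relations displayed in Figure~\ref{fig:subgroup_lattice}.
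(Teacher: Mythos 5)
Your proposal is correct and arrives at the same counts as the paper ($11$ subgroups of order two, $15$ of order four, $7$ of order eight, hence $35$ in all, collapsing to $27$ conjugacy classes), but it is organized differently. The paper's proof is a bare enumeration: it computes element orders to obtain the order-two subgroups, asserts the counts $15$ and $7$ ``via computations in $\Gamma_2$,'' exhibits explicit conjugating elements for the conjugate pairs, and certifies the whole list with {\tt Magma}. You replace two of these brute-force steps with structural arguments: the order-eight count follows from $\Gamma_2^{\mathrm{ab}}\cong\Z_2^3$ (index-two subgroups biject with surjections onto $\Z_2$, giving $2^3-1=7$, all normal), and the conjugacy analysis is tamed by noting that conjugation factors through the order-four group $\Gamma_2/Z(\Gamma_2)$. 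The normality observation also corrects a slip in the paper's own wording, which speaks of ``four conjugate pairs of four-element subgroups and four conjugate pairs of eight-element subgroups'': index-two subgroups admit no conjugate pairs, and the arithmetic of the proposition ($11\to 7$, $15\to 11$, $7\to 7$) shows the intended statement is four pairs in order two and four in order four, matching the eight starred nodes in Figure~\ref{fig:subgroup_lattice}. The one step you, like the paper, leave as deferred bookkeeping is the enumeration of the thirteen Klein four-subgroups (seven inside the normal $\Z_2^3$, six generated by two of $\rho$, $m\rho$, $b$, $mb$) and the collapse of the fifteen order-four subgroups to eleven classes; these numbers are correct, and since you propose the same computer cross-check the paper itself relies on, the argument is sound.
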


We note that this proposition corrects a mistake in the literature \cite{MR867798}, in which it is reported that 28 possible symmetry groups exist.

\begin{proof}
From the semidirect product structure we can see that all but four of the 15 non-identity elements  of $\Gamma_2$ are order two.  The exceptions are $a_1 = (1,-1,1,(12))$,  $a_2 = (1,1,-1,(12))$, $ma_1$, and $ma_2$, each of which has order four.  Thus, there are 11 subgroups of $\Gamma_2$ of order two.  Via computations in $\Gamma_2$, we proceed to find 15 subgroups of order four and 7 subgroups of order eight.  Combined with the trivial subgroup and all of $\Gamma_2$, we arrive at a total of 35 subgroups.  

However, some of these subgroups occur in conjugate pairs.  For example, the subgroups $\langle m ,\rho \rangle = \{ e , m , \rho , m\rho\}$ and $\langle  m , b \rangle = \{ e , m , b , m b\}$ are conjugate four-element subgroups, since $i_1(m)i_1^{-1} = m$, $i_1(\rho)i_1^{-1} = b$, and $i_1(m\rho)i_1^{-1} = mb$.  

There are four conjugate pairs of four-element subgroups and four conjugate pairs of eight-element subgroups.  Therefore precisely 27 mutually nonconjugate subgroups of $\Gamma_2$ exist.  Our calculations (by hand) of these $\Gamma_2$ subgroups and the corresponding lattice structure given in Figure~\ref{fig:subgroup_lattice} have been verified using the software {\tt Magma}.  
\end{proof}

\footnotesize
\begin{center}
\begin{figure}[t]
     \def\svgwidth{\columnwidth}
     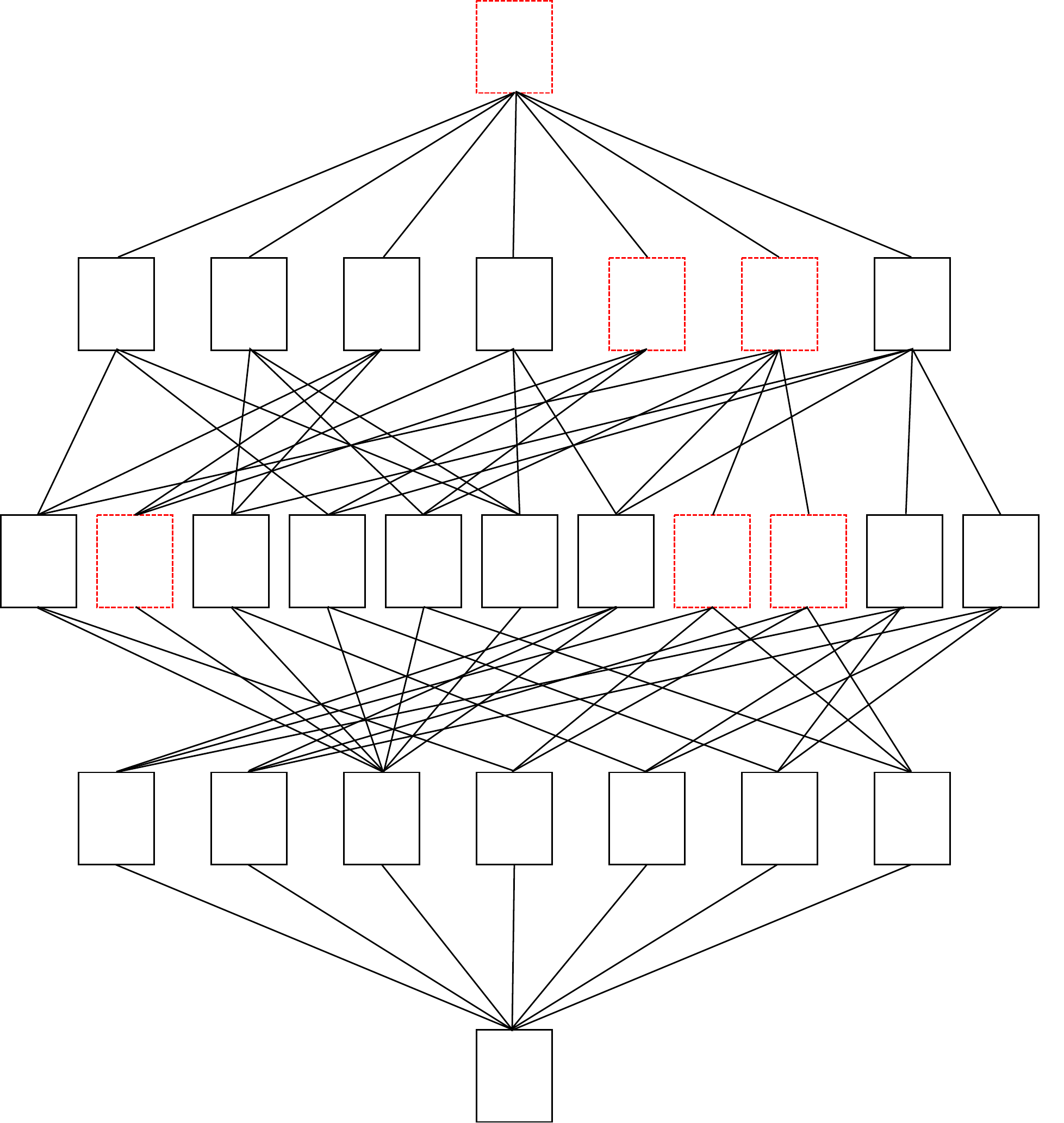
     \caption{The 27 subgroups (up to conjugacy) of the Whitten group $\Gamma_2$ are depicted above as a subgroup lattice.  There are 8 pairs of conjugate subgroups of $\Gamma_2$; above, one representative is displayed for each conjugate pair; these are denoted by $\ast$.  For 21 of these subgroups, we provide an example realizing this symmetry group.  The 6 subgroups for which no example has been found are depicted with a red dashed border.}
\label{fig:subgroup_lattice}
\end{figure}
\end{center}
\normalsize

\section{Computational Examples} \label{sec:snappea}

For a hyperbolic link $L$, we may compute its symmetry group $\Sigma(L)$ using the software \texttt{SnapPea}.  This software can calculate the mapping class group $\MCG(S^3 \setminus L)$ of the link complement. The elements of $\MCG(S^3,L)$ which extend through the boundary tori to automorphisms of all of $S^3$ form a copy of $\MCG(S^3,L)$ inside $\MCG(S^3 \setminus L)$. We can detect such maps using the following standard lemma.

\begin{lemma}
A map in $\Aut(S^3 \setminus L)$ extends to all of $S^3$ if and only if it sends meridians of the boundary tori to meridians. Moreover, any two such extensions are isotopic.
\end{lemma}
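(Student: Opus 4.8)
The plan is to work with the compact complement $M = S^3 \setminus \operatorname{int} N(L)$, where $N(L) = \bigsqcup_i V_i$ is a tubular neighborhood of $L$ with each $V_i$ a solid torus, so that $S^3 = M \cup_\partial \bigsqcup_i V_i$ and each boundary torus $T_i = \partial V_i$ carries a meridian $\mu_i$, the essential curve on $T_i$ bounding a meridian disk $D_i$ in $V_i$. A self-homeomorphism of $S^3 \setminus L$ is, up to isotopy, a homeomorphism of $M$, so I will phrase everything in terms of $M$ and reduce the whole statement to one fact about solid tori.

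For the easy direction, suppose $f$ admits an extension $\bar f \co S^3 \to S^3$. Then the complement of $L$ is preserved, so $\bar f(L) = L$ and $\bar f$ carries a tubular neighborhood of $L$ to a tubular neighborhood of $L$, permuting components by some $\sigma \in S_\mu$. Because $\bar f$ restricts to a homeomorphism $V_i \to V_{\sigma(i)}$ of solid tori, it sends meridian disks to meridian disks, hence on the boundary it carries $\mu_i$ to a curve isotopic to $\mu_{\sigma(i)}$. Here I use only uniqueness of tubular neighborhoods, so that $\bar f(N(L))$ may be isotoped back onto the chosen $N(L)$ and meridians remain well defined up to isotopy.

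For the converse I would first establish the key sub-lemma: a homeomorphism $h \co T_i \to T_{\sigma(i)}$ that carries $\mu_i$ to $\mu_{\sigma(i)}$ up to isotopy extends to a homeomorphism $V_i \to V_{\sigma(i)}$. After an isotopy arrange $h(\mu_i) = \partial D_{\sigma(i)}$, then extend $h$ across a collar so that $D_i \mapsto D_{\sigma(i)}$; now $V_i \setminus N(D_i)$ and $V_{\sigma(i)} \setminus N(D_{\sigma(i)})$ are $3$-balls whose boundary spheres are already matched by $h$, so the Alexander trick (coning) extends $h$ over the balls. Applying this to $f|_{T_i}$ for each $i$ and gluing to $f$ on $M$ produces the desired extension $\bar f$ to $S^3$.

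Finally, for uniqueness, suppose $\bar f_1, \bar f_2$ are two extensions; they agree on $M$ and induce the same permutation, so $g = \bar f_2^{-1} \bar f_1$ is the identity on $M$ and restricts on each $V_i$ to a self-homeomorphism fixing $T_i$ pointwise. It therefore suffices to show such a $g|_{V_i}$ is isotopic to the identity rel $\partial V_i$. Since the solid torus is irreducible, any two properly embedded meridian disks with the same boundary are isotopic rel boundary, so I may isotope $g|_{V_i}$ to fix $D_i$; cutting along $D_i$ turns $V_i$ into a $3$-ball on whose boundary $g$ is the identity, and the Alexander trick finishes the argument. The point demanding the most care, and the real crux, is this uniqueness step, namely verifying that the mapping class group of the solid torus rel boundary is trivial; by contrast the existence direction is a routine coning argument.
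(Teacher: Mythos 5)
The paper offers no proof of this lemma at all---it is introduced as ``the following standard lemma'' and the text moves straight on to the \texttt{SnapPy} computations---so there is no argument of the authors' to compare yours against. Your proof is the standard one and it is correct: the forward direction via uniqueness of tubular neighborhoods and the characterization of the meridian as the unique essential curve on $\partial V_i$ bounding a disk in $V_i$; the converse via the classical fact that a homeomorphism of boundary tori extends over solid tori iff it preserves meridian slopes (collar plus Alexander trick); and uniqueness reduced to triviality of the mapping class group of the solid torus rel boundary, via disk isotopy in an irreducible manifold followed by coning. You also correctly identify the only reading of ``any two such extensions are isotopic'' that has content: a literal extension of a fixed self-map of the open complement is unique by density and continuity, so one must pass to the compact exterior $M$, where the extension is only determined on $M$ and genuinely requires the solid-torus argument on each $V_i$; your setup does exactly this. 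Two small caveats worth recording: the final Alexander trick is a topological/PL argument, and in the smooth category the corresponding step is Cerf's theorem that $\operatorname{Diff}(B^3 \operatorname{rel} \partial)$ is connected; and the claim that a self-homeomorphism of the open complement is properly isotopic to one restricting to $M$ deserves a citation (tameness of the toral ends), though for the hyperbolic links the paper actually computes with, this is immediate. Neither affects the correctness of your argument.
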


We utilize the \texttt{Python} front end \texttt{SnapPy} for \texttt{SnapPea} written by Marc Culler and Nathan Dunfield. To each map on the boundary tori of the link complement, \texttt{SnapPy} assigns a collection of $\mu$ matrices along with a permutation element which records how the components of the link were permuted.  Each matrix is $2 \times 2$ and records the images of the meridians and longitudes of the appropriate component, along with the orientation of the ambient space. The effect of the map on the orientation $\epsilon_i$ of the given component and the orientation $\epsilon_0$ of $S^3$  is given by the rules below.  Note that if the matrix for one  boundary torus indicates that the orientation on $S^3$ is reversed, then so will the matrices for all other boundary tori, since these matrices result from restricting a single map on $S^3$.

\begin{equation*}
\begin{array}{ccccccc}
\left(
\begin{array}{cc}
 1 & n \\ 0 & 1
\end{array}
\right)
 &&
\begin{pmatrix}
 -1 & n \\ 0 & 1
\end{pmatrix}
&&
\begin{pmatrix}
 1 & n \\ 0 & -1
\end{pmatrix}
&&
\begin{pmatrix}
 -1 & n \\ 0 & -1
\end{pmatrix}
\\
\Downarrow &&
\Downarrow &&
\Downarrow &&
\Downarrow 
\\
 \epsilon_0 = 1, \epsilon_i = 1  & \qquad &
 \epsilon_0 = -1,  \epsilon_i = 1  & \qquad & 
\epsilon_0 = -1,  \epsilon_i = -1  & \qquad & 
 \epsilon_0 = 1,  \epsilon_i = -1   
\end{array}
\end{equation*}

Using these rules, it is easy to extract $\Sigma(L)$ from \texttt{SnapPy}. We computed the symmetry group for all 77,036 two-component hyperbolic links of 14 and fewer crossings in \texttt{SnapPy}'s database. Table~\ref{tab:conj} shows the census of symmetry groups of hyperbolic links found using \texttt{SnapPy}. The data file containing the Whitten group elements for these links is included in the \texttt{Arxiv} data repository and it is a future project to incorporate our computational techniques into \texttt{SnapPy}.

\begin{table}[ht]
\begin{center}
\begin{tabular}{ccc}
\begin{tabular}[t]{cc}
\toprule
Group & Number of Links \\
\midrule
$\{id\} $ & 53484 \\
& \\
& \\
$\Sigma_{2,1}$    &      17951 \\
$\Sigma_{2,2}$    &      7 \\
$\Sigma_{2,3}$    &      9 \\
$\Sigma_{2,4}$  &1336\\
$\Sigma_{2,5} $ &418\\
$\Sigma_{2,6} $& 3\\
$\Sigma_{2,7} $ & 123\\
\end{tabular}

&
\begin{tabular}[t]{cc}
\toprule
Group & Number of Links \\
\midrule
$\Sigma_{4,1} $&  1396\\
$\Sigma_{4,2} $ & 2167\\
$\Sigma_{4,3} $ & 24\\
$\Sigma_{4,4} $&  0\\
$\Sigma_{4,5} $ & 12\\
$\Sigma_{4,6} $&  0\\
$\Sigma_{4,7} $ & 0\\
$\Sigma_{4,8} $&  2\\
$\Sigma_{4,9} $ & 11\\
$\Sigma_{4,10} $ & 1\\
$\Sigma_{4,11} $  &4\\
\end{tabular}
&
\begin{tabular}[t]{cc}
\toprule
Group & Number of Links \\
\midrule
$\Sigma_{8,1} $&52\\
$\Sigma_{8,2} $ & 25\\
$\Sigma_{8,3} $ & 0\\
$\Sigma_{8,4} $&  1\\
$\Sigma_{8,5} $  &2\\
$\Sigma_{8,6} $&  0\\
$\Sigma_{8,7} $&  8\\
& \\
& \\
$\Gamma_2$ & 0 \\
\end{tabular} \\

\end{tabular}
\medskip
\caption[Census of symmetry groups for hyperbolic two-component links.] {The number of links admitting each symmetry group among the 77,036 hyperbolic two-component links included in the Thistlethwaite table distributed with \texttt{SnapPy}. Among these links, almost two-thirds had no symmetry (trivial symmetry group) and there were no examples with full symmetry.}
\label{tab:conj}
\end{center}
\end{table}

There are 293 non-hyperbolic two-component links with up to 14 crossings in Thistlethwaite's link table. For these links, we were unable to compute the symmetry groups using SnapPea. We were able to get partial information about the symmetry groups using the {\tt Mathematica} package {\tt KnotTheory}~\cite{knottheory}. For each link $L$ and each Whitten group element $\gamma$, we computed a number of invariants of $L$ and $L^\gamma$ in an attempt to rule out $\gamma$ as an element of $\Sigma(L)$. To rule out ``exchange'' symmetries, we applied two tests. First, we computed the Jones polynomials of each component of $L$ to try to rule out exchanges between components of different knot types. If that failed, we turned to the ``satellite lemma'', which we use often in \cite{intrinsicsym}.

\begin{lemma}
\label{lem:satellitelemma}
Suppose that $L(K,i)$ is a satellite of $L$ constructed by replacing component $i$ with a knot or link $K$. Then $L$ cannot have a pure exchange symmetry exchanging components $i$ and $j$ unless $L(K,i)$ and $L(K,j)$ are isotopic.
\end{lemma}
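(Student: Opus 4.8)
The plan is to prove the contrapositive: if $L$ admits the pure exchange symmetry $\sigma = (1,1,\dots,1,(ij))$ that swaps components $i$ and $j$, then the satellites $L(K,i)$ and $L(K,j)$ are isotopic. By the definition of admitting a symmetry, there is an ambient isotopy $H_t$ of $S^3$ with $H_0 = \id$ whose time-one map $h := H_1$ carries each component of $L$ to the corresponding component of $L^\sigma$, preserving component orientations. Unwinding $L^\sigma$ for the transposition $(ij)$ with all signs trivial, this says precisely that $h$ is an orientation-preserving homeomorphism of $S^3$ with $h(K_i) = K_j$ and $h(K_j) = K_i$ as oriented knots, while $h(K_l) = K_l$ for every other component. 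The hypothesis that the symmetry is \emph{pure} --- $\epsilon_0 = +1$ and all $\epsilon_k = +1$ --- is exactly what guarantees that $h$ preserves the orientation of $S^3$ and of each core circle, and this will be the feature that makes the pattern $K$ survive unchanged under $h$.

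Next I would make the satellite construction precise and show it is natural under $h$. Write the pattern $K$ as a fixed link in the standard solid torus $V = S^1 \times D^2$, and realize $L(K,i)$ by inserting $K$ via the canonical framed embedding $f_i \co V \to N(K_i)$ that sends the oriented core to the oriented $K_i$ and the longitude $S^1 \times \{pt\}$ to the $0$-framed (Seifert) longitude; this embedding is unique up to isotopy, so $L(K,i) = f_i(K) \cup \bigcup_{l \ne i} K_l$ is well defined up to isotopy, and similarly for $L(K,j)$. The key claim is that $h \circ f_i$ is isotopic to $f_j$. Indeed, $h \circ f_i$ is an embedding of $V$ onto a tubular neighborhood of $h(K_i) = K_j$ carrying the core to the oriented $K_j$; and because $h$ is an orientation-preserving homeomorphism of $S^3$, it takes Seifert surfaces of $K_i$ to Seifert surfaces of $K_j$, hence the $0$-framed longitude to the $0$-framed longitude. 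Thus $h \circ f_i$ and $f_j$ are two canonical framed parametrizations of a neighborhood of $K_j$, so they agree up to isotopy (and up to the $D^2$-rotation ambiguity that does not affect the isotopy class of the inserted pattern).

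Combining these, $h$ carries the inserted pattern $f_i(K)$ to the pattern inserted along the canonical framing of $K_j$, namely (up to isotopy) $f_j(K)$, while sending $K_j \mapsto K_i$ and fixing the remaining components; hence $h(L(K,i)) = L(K,j)$ up to isotopy. Since $h$ is the time-one map of the ambient isotopy $H_t$, dragging $L(K,i)$ along $H_t$ exhibits the required isotopy $L(K,i) \sim L(K,j)$, completing the contrapositive. The main obstacle is the naturality step: one must check that the satellite is genuinely well defined (independent of the solid-torus parametrization up to isotopy) and that $h$ respects both the $0$-framing and the core orientation. This is precisely where purity is used: a mirror ($\epsilon_0 = -1$) would force $h$ to reflect the pattern, and a component reversal ($\epsilon_k = -1$) would reverse the core orientation, so in either case $h$ would relate $L(K,i)$ not to $L(K,j)$ but to a mirrored or reversed satellite, and the conclusion could fail.
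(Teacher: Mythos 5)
Your proof is correct and follows essentially the same route as the paper's: the paper's two-sentence argument says precisely that a pure exchange carries an oriented solid tube around component $i$ to an oriented solid tube around component $j$, dragging the inserted pattern $K$ along to give the isotopy $L(K,i)\sim L(K,j)$. You have simply made explicit the framing and orientation bookkeeping that the paper leaves implicit in the word ``oriented.''
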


\begin{proof}
Such a pure exchange would carry an oriented solid tube around $L_i$ to a corresponding oriented solid tube around $L_j$. If we imagine $K$ embedded in this tube, this generates an isotopy between~$L(K,i)$ and~$L(K,j)$.
\end{proof}

The point of this lemma is that we can often distinguish $L(K,i)$ and $L(K,j)$ using classical invariants which are insensitive to the original labeling of the link. In our computations, we replaced component $i$ of $L$ and $L^\gamma$ with a $(2,2i)$ torus link (for $i > 0$) and compared the Jones polynomials of the resulting links. An example is shown in Figure~\ref{fig:satellite}.

\begin{figure}
\hfill
\includegraphics[height=1.5in]{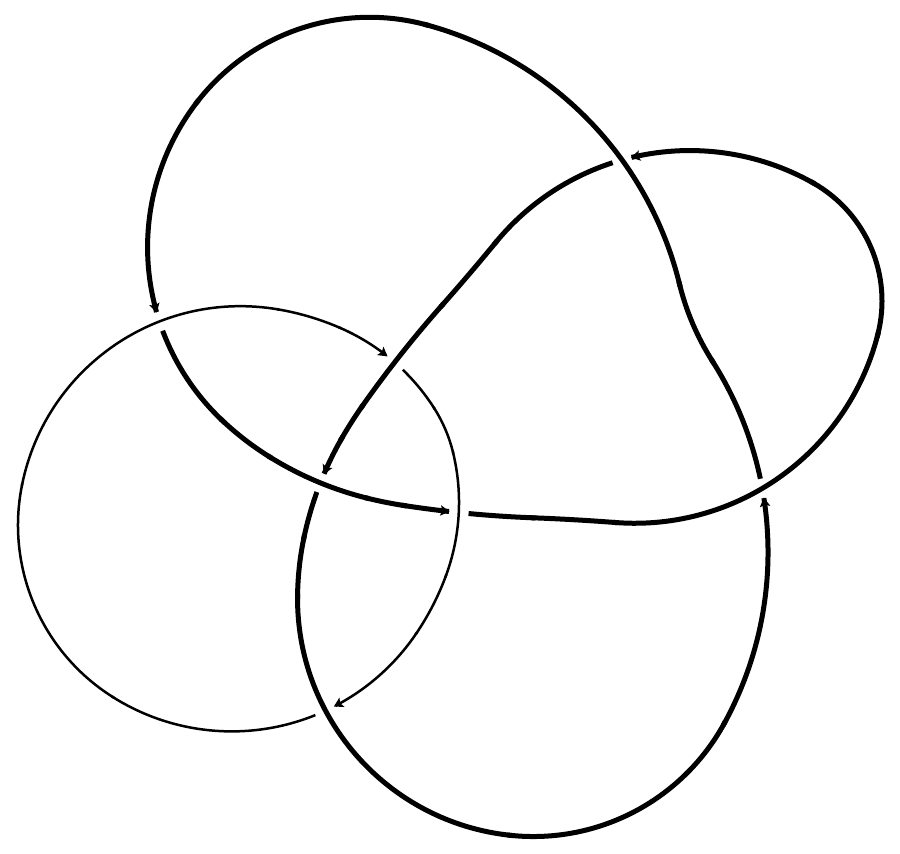}
\hfill
\includegraphics[height=1.5in]{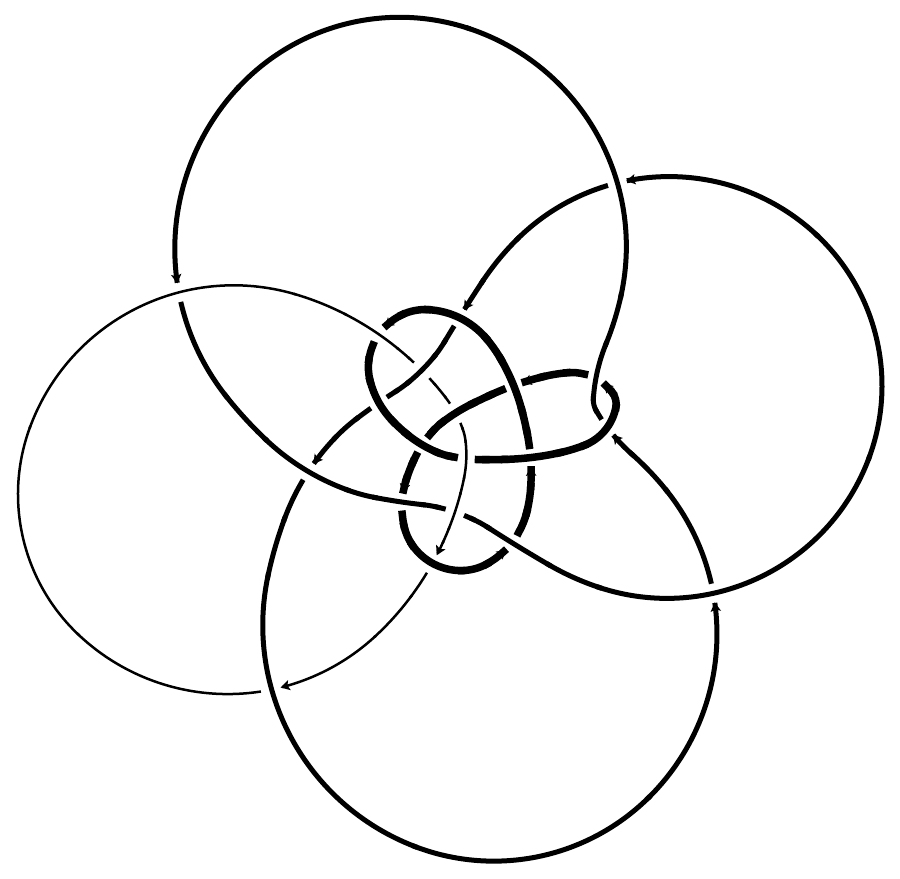}
\hfill
\includegraphics[height=1.5in]{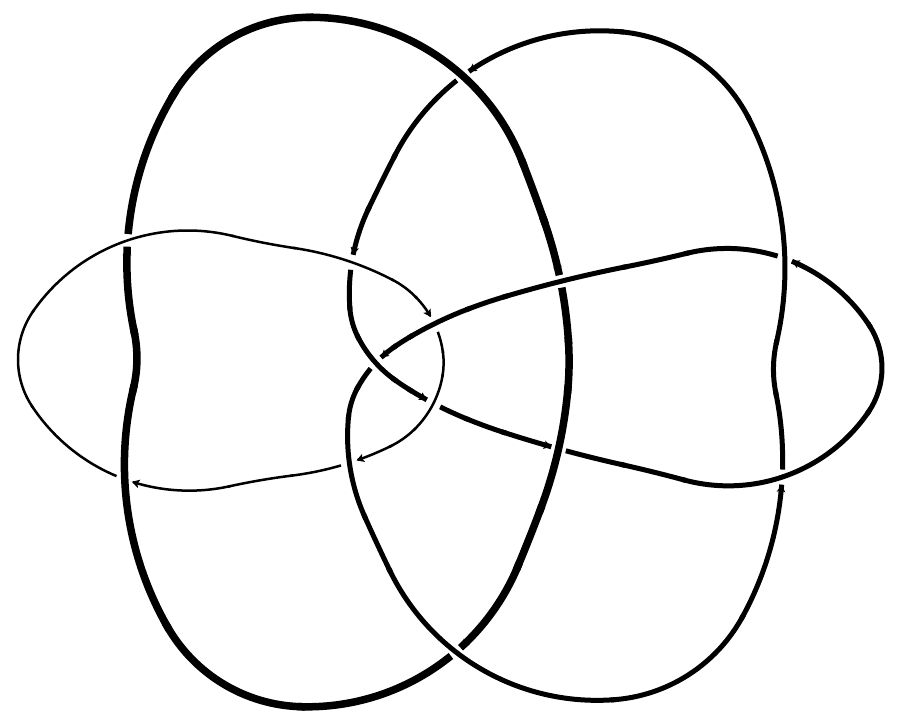}
\hfill
\hphantom{.}
\caption{The link $7^2_6$ (or $7a1$ in Thistlethwaite's notation) has two unknotted components with linking number zero. To decide whether this link admits a pure exchange symmetry, we construct two satellite links. The center link has one component replaced by a Hopf link while the other is unframed, while the right link has the other component replaced by a Hopf link. If the original $7^2_6$ admits a pure exchange symmetry, these satellites would be isotopic. However the Jones polynomial of the center link is $a^{10}-2 a^9+a^8-a^6+2 a^5-a^4-\frac{1}{a^4}+2 a^3+\frac{1}{a^3}+\frac{1}{a^2}-\frac{1}{a}+2$ while the Jones polynomial of the right-hand link is $a^7-2 a^6+2 a^5-2 a^4+2 a^3-\frac{1}{a^3}+a^2+\frac{2}{a^2}-\frac{2}{a}+3$. This proves that $7^2_6$ does not admit a pure exchange symmetry.
\label{fig:satellite}}
\end{figure}

Ruling out elements of the Whitten group using these methods gave us a subgroup of $\Gamma_2$ guaranteed to contain $\Sigma(L)$. We found $9$ links with $\Sigma(L) < \Sigma_{4,1}$, $89$ links with $\Sigma(L) < \Sigma_{4,2}$, $194$ links with $\Sigma(L) < \Sigma_{2,1}$ and 1 link with $\Sigma(L) < \Sigma_{8,1}$. Consulting the subgroup lattice of $\Gamma_2$ along with Table~\ref{tab:conj}, we saw that only this last link, the 194th 14-crossing two component non-hyperbolic link in Thistlethwaite's table, might have had a new symmetry group which was not already represented by a hyperbolic link. The potential group was $\Sigma_{4,4} < \Sigma_{8,1}$. However, rearranging the diagram as in Figure~\ref{fig:danger} ruled out $\Sigma_{4,4}$ by revealing a pure exchange symmetry.

\begin{figure}
\hfill
\includegraphics[height=2.5in]{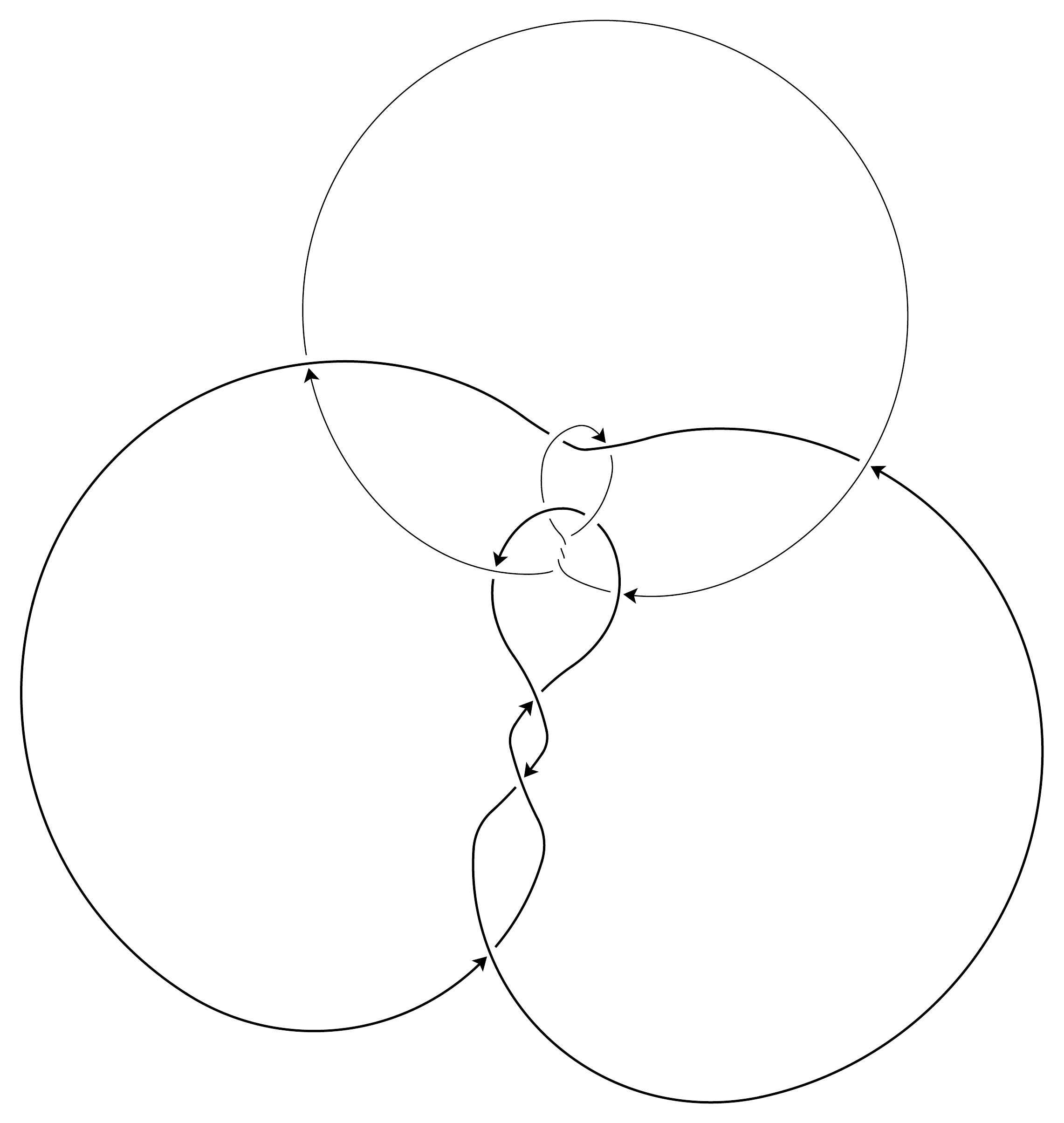}
\hfill
\includegraphics[height=2.5in]{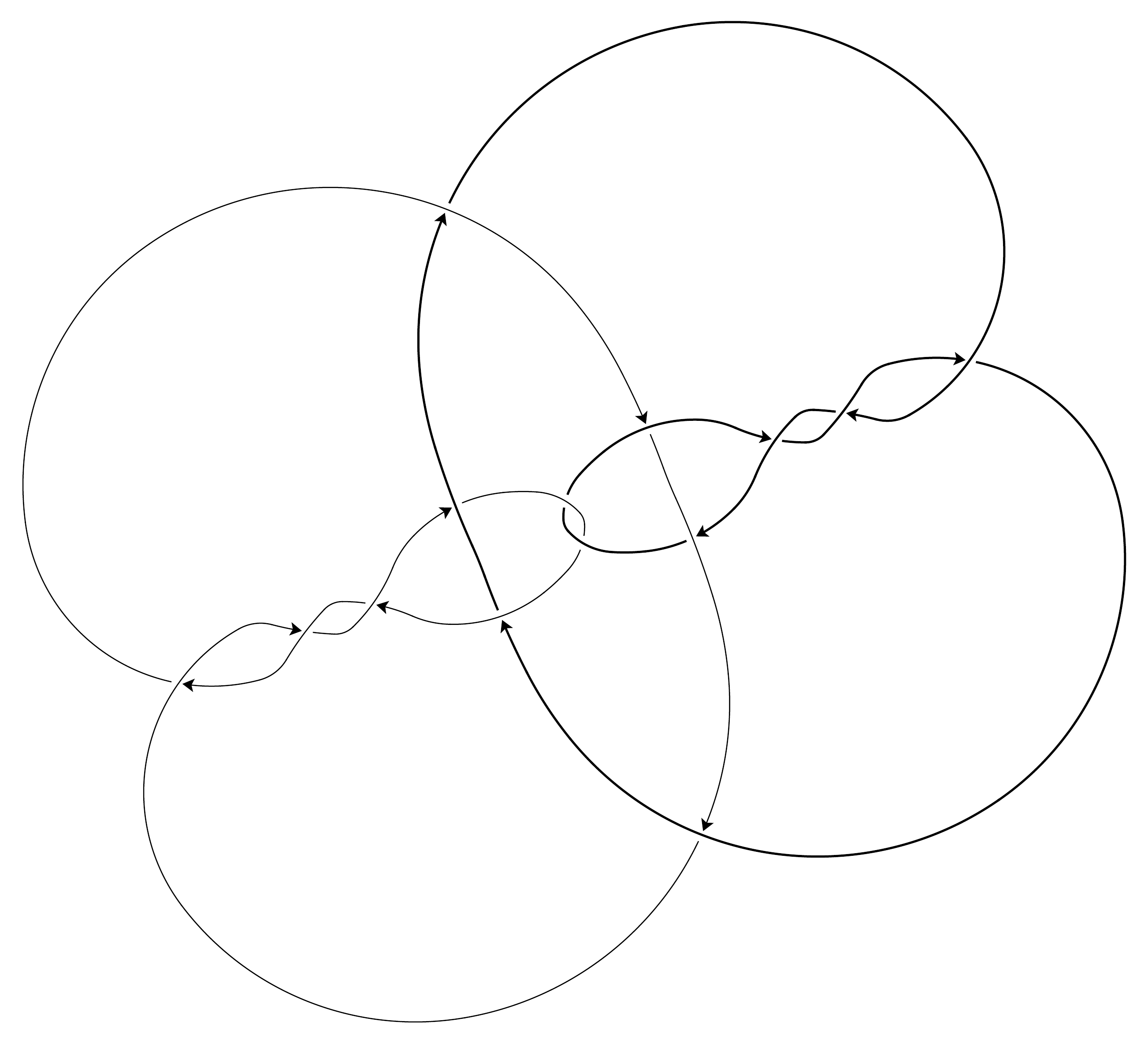}
\hfill
\hphantom{.}
\caption{The 194th non-hyperbolic two-component link with 14 crossings in Thistlethwaite's table has $\Sigma(L) < \Sigma_{8,1}$ according to our polynomial and satellite lemma calculations. The original diagram of this link is shown at left. Since $\Sigma_{4,4} < \Sigma_{8,1}$, this link could have the ``missing'' symmetry group $\Sigma_{4,4}$. However, rearranging the diagram as shown on the right reveals that this link has pure exchange symmetry. Since pure exchange is not part of $\Sigma_{4,4}$, we see that this link cannot be an example of a link with $\Sigma_{4,4}$ symmetry.
\label{fig:danger}}
\end{figure}

Having obtained supergroups of the $\Sigma(L)$ groups for these non-hyperbolic links, we turned to obtaining subgroups of $\Sigma(L)$. To do so, we viewed each diagram as a polyhedral decomposition of the 2-sphere and enumerated the combinatorial symmetries of each polyhedron using \texttt{Mathematica}. Those symmetries which extended to symmetries of the link after crossing information was taken into account were classified according to the Whitten element they represented. We called these \emph{diagrammatic} symmetries of each link. Of course, different diagrams are expected to reveal additional symmetries, so the diagrammatic symmetry group of a diagram of a link is only a subgroup of $\Sigma(L)$. In most cases, we still have a number of potential symmetries which may or may not be present for the link. However, we found 70 cases where the diagrammatic symmetry groups represented all of $\Sigma(L)$ since they agreed with the ``supergroups'' computed earlier: 6 links with $\Sigma(L) = \Sigma_{4,1}$, 4 links with $\Sigma(L) = \Sigma_{4,2}$, and 60 links with $\Sigma(L) = \Sigma_{2,1}$.

\section{Examples of links with particular symmetry groups}

Table~\ref{t:examples} lists examples of prime, nonsplit links by their intrinsic symmetry groups.  We present examples for 21 of the 27 different symmetry groups.

\begin{table}[h]
\begin{tabular}{ll}
\toprule
Symmetry Group & Example \\
\midrule
$\{id\}$&11a164 \\ 
\midrule
$\s{2}{2}=\langle m \rangle$&\small{DT[8,10,-16:12,14,24,2,-20,-22,-4,-6,-18]}\\

$\s{2}{3}=\langle mPI \rangle$& \small{DT[8,10,-14:12,22,24,-18,-20,-4,-6,-16,2]}\\
 
$\s{2}{1}=\langle PI \rangle$&7a2\\
 
$\s{2}{4}=\langle i_1 \rangle$&10a98\\
 
$\s{2}{5}=\langle \rho \rangle$& \small{DT[14,16,44,20,4,2,18,10,12,8,28:36,38,22,42,26,24,40,32,34,30,6]}\\
 
$\s{2}{6}=\langle m\rho \rangle$& \small{DT[44,-20,-16,-22,-14,-4,-6,-10,-12,-28,-8:2,34,-18,24,42,26,40,30,32,36,38]} \\
 
$\s{2}{7}=\langle mi_1 \rangle$&10a81\\
\midrule
$\s{4}{2}$&7a3\\
 
$\s{4}{4}$& {\it No example known}\\
 
$\s{4}{1}$&4a1\\
 
$\s{4}{11}$& \small{DT[16,-6,-12,-20,-22,-4,28:2,14,26,-10,-8,18,24]}\\
 
$\s{4}{3}$& 10n46, 10a56 \\ 

$\s{4}{5}$& \small{DT[14,6,10,16,4,20:22,8,2,24,12,18]}\\
 
$\s{4}{9}$&10n36\\
 
$\s{4}{6}$& {\it No example known}\\
 
$\s{4}{7}$& {\it No example known}\\
 
$\s{4}{8}$& \small{DT[10,-14,-18,24:2,28,-4,-12,-20,-6,-16,26,8,22]}\\
 
$\s{4}{10}$& \small{DT[10,-14,-20,24:2,28,-16,-4,-12,-6,-18,26,8,22]}\\
\midrule
$\s{8}{7}$&10n59\\
 
$\s{8}{2}$&2a1\\
 
$\s{8}{1}$&5a1\\
 
$\s{8}{4}$& \small{DT[14,-16,22,-28,24,-18:12,-20,-10,-2,26,8,5,-6]}\\
 
$\s{8}{6}$&{\it No example known}\\
 
$\s{8}{3}$&{\it No example known}\\
 
$\s{8}{5}$& \small{DT[10,-14,-18,22:2,24,-4,-12,-6,-16,8,10]}\\
\midrule
$\Gamma_2$&{\it No example known}\\
\bottomrule
\end{tabular}
\smallskip
\caption{Examples of prime, nontrivial two-component links are currently known for 21 of the possible 27 intrinsic symmetry groups.}
\label{t:examples}
\end{table}

The example links for the groups $\{id\}$ and $\s{8}{5}$  were originally found by Hillman \cite{MR867798}.  He listed examples for many of the 12 subgroups that do not include $i_1$ or $i_2$ in that paper as well.  The examples for the groups $\s{8}{1}$, $\s{8}{2}$, $\s{4}{1}$, $\s{4}{2}$, and $\s{2}{1}$ were generally known; for example, proofs that these links exhibit the given symmetry groups are found in \cite{intrinsicsym}.  

The examples for $\langle \rho \rangle$, $\langle m\rho \rangle$ and $\s{8}{7}$ are found in Figure~\ref{fig:new_examples} and are new to the literature; we prove that these symmetry groups are correct below.  All other examples have been found using {\tt SnapPea} and are new to the literature.

\begin{figure}
\includegraphics[height=2.0in]{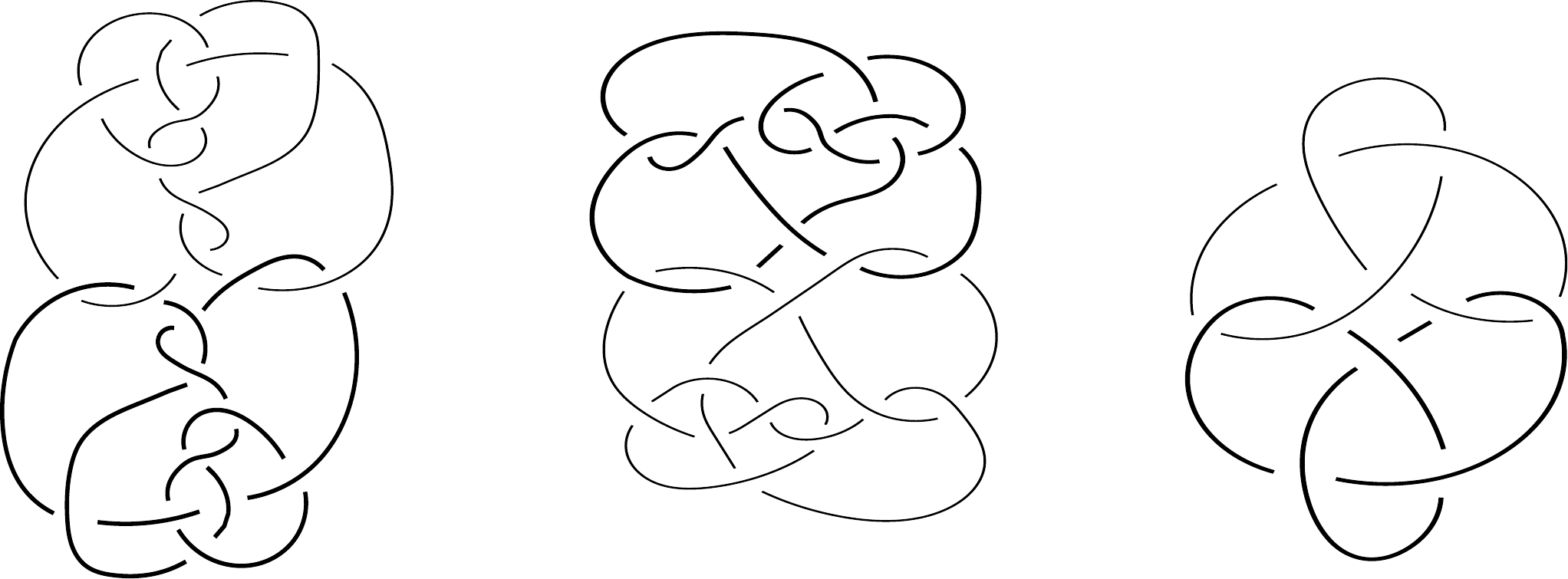}
\caption{Three new example links for symmetry groups, not found using \texttt{SnapPea}.  From left to right, these links exhibit the symmetry groups $\langle \rho \rangle$, $\langle m\rho \rangle$ and $\s{8}{7}$.
\label{fig:new_examples}}
\end{figure}

\begin{theorem}
The three links in Figure~\ref{fig:new_examples} possess the symmetry groups $\langle \rho \rangle$, $\langle m\rho \rangle$ and $\s{8}{7}$, respectively.
\end{theorem}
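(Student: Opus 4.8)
The plan is to establish each equality $\Sigma(L)=\langle\rho\rangle$, $\langle m\rho\rangle$, and $\s{8}{7}$ by proving two opposite inclusions, exploiting throughout that $\Sigma(L)$ is a subgroup of $\Gamma_2$, so that only a generating set must be realized and only the cosets lying outside the claimed group must be obstructed. For the inclusion $\supseteq$ I would read generators directly off the symmetric diagrams of Figure~\ref{fig:new_examples}. A rotation of the left diagram that interchanges the two strands while preserving every orientation and the orientation of $S^3$ realizes the pure exchange $\rho$; a reflection of the middle diagram that interchanges the components reverses the orientation of $S^3$ and so realizes the mirror-exchange $m\rho$; and in the right diagram I would exhibit two visible symmetries generating $\s{8}{7}=\langle i_1, ma_1\rangle$, for instance one reversing a single component and one realizing $ma_1$. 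Each such visible symmetry supplies the required orientation- and component-preserving isotopy from $L$ to $L^\gamma$.

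For the reverse inclusions I would rule out the remaining Whitten elements using three families of obstructions together with the coset structure of $\Gamma_2$. The linking number $\lk(K_1,K_2)$ is multiplied by $\epsilon_0\epsilon_1\epsilon_2$ under $\gamma$, so it constrains the admissible sign patterns; the individual symmetry types of the components $K_1,K_2$ (invertibility and chirality, detected from the Jones or HOMFLY polynomial of each component) obstruct the inversions $i_1,i_2,\PI$ and the mirror-without-exchange elements $m, mi_1, mi_2, m\PI$; and the satellite lemma (Lemma~\ref{lem:satellitelemma}) obstructs exchange symmetries. The coset structure shortens the work dramatically. For $\s{8}{7}$, which has index two in $\Gamma_2$ with nontrivial coset $m\,\s{8}{7}$, the lower bound already gives $m\rho\in\Sigma(L)$, so using $\s{8}{7}\subseteq\Sigma(L)$ and $m=m\rho\ast\rho$ the whole reverse inclusion $\Sigma(L)\subseteq\s{8}{7}$ becomes equivalent to the single assertion $\rho\notin\Sigma(L)$: the right-hand link admits no pure exchange. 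For the order-two groups I would write $\Sigma(L)=N\sqcup\rho N$ (respectively $N\sqcup m\rho\,N$) with $N=\Sigma(L)\cap\{p=e\}$, reducing both cases to proving $N=\{e\}$, after which the exchange coset collapses to a single element.

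I expect the exchange symmetries to be the crux. The one-variable Jones polynomial ignores the labelling of components, so $V_L(t)=V_{L^\rho}(t)$ while $V_{L^{m\rho}}(t)=V_L(t^{-1})$; hence, as soon as a mirror-exchange is present the Jones polynomial is forced to be palindromic and can neither separate $\rho$ from $m\rho$ nor detect $L\not\sim L^*$. This is unavoidable precisely for the $\s{8}{7}$ example, whose components are invertible (indeed $i_1\in\Sigma(L)$) and whose linking number must vanish, so that neither component symmetry nor the linking number offers any leverage. Here I would apply the satellite lemma with a chiral pattern, replacing one component by a fixed knot or link $P$ and comparing the Jones polynomials of $L(P,1)$ and $L(P,2)$ exactly as in Figure~\ref{fig:satellite}: a pure exchange forces $L(P,1)\sim L(P,2)$, whereas the mirror-exchange only forces $L(P,1)\sim L(P,2)^*$, so a computation showing $L(P,1)\not\sim L(P,2)$ certifies $\rho\notin\Sigma(L)$ and completes the $\s{8}{7}$ case.

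To finish the two order-two cases I would first identify the knot types of the components from their Jones polynomials. When these components are knots of the same, fully asymmetric type (chiral, noninvertible, and not negatively amphichiral), that single fact kills every element of $N\setminus\{e\}$ simultaneously: the inversions $i_1,i_2,\PI$ require an invertible component, and $m, mi_1, mi_2, m\PI$ require a component equal to its own mirror, all of which fail. Thus $N=\{e\}$, and since the exchange part is a single coset of $N$, $\Sigma(L)$ collapses to $\langle\rho\rangle$ or $\langle m\rho\rangle$ as claimed; should the components instead be too symmetric for this argument, I would rule out the chiral elements using the chirality of $L$ itself (a non-palindromic Jones or HOMFLY polynomial, available exactly when no mirror-type symmetry survives) and the residual exchanges by a further satellite comparison. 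Combining the linking-number constraint, the component and chirality obstructions, and the satellite computations then pins $\Sigma(L)$ down to the claimed group in all three cases.
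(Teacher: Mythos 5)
Your proposal is correct in outline and, for the first two links, follows essentially the paper's argument: the decisive fact is that the components are copies of a totally asymmetric knot (here $9_{32}$, respectively $9_{32}$ and $9_{32}^*$), which kills every non-exchange element of $\Sigma(L)$, after which your coset bookkeeping ($\Sigma(L)=N\sqcup gN$ with $N=\{e\}$) collapses the group onto the single exhibited exchange symmetry --- exactly the paper's ``the knot $9_{32}$ has no symmetry, therefore the only possible nontrivial symmetries are $\rho$ and $m\rho$.'' One small imprecision: the middle link's components are of \emph{mirror} knot types, not ``the same, fully asymmetric type,'' though your argument is unaffected since only the asymmetry of each individual component is used. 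Where you genuinely diverge is the $\s{8}{7}$ case. You assume that the components there give ``no leverage'' against the pure exchange and therefore reach for Lemma~\ref{lem:satellitelemma} with a chiral pattern; in fact the components of $10n59$ are $3_1$ and $3_1^*$, which are distinct knot types since the trefoil is chiral, so $\rho$ (and likewise $m$, $mi_1$, $mi_2$, $m\PI$) is obstructed immediately by comparing component knot types --- this is all the paper uses to get $\Sigma(L)\le\s{8}{7}$ before exhibiting $m\rho$ and $i_2$. Your satellite-lemma route would also succeed (the two satellites are distinguishable precisely because the pattern sits on a left- versus a right-handed trefoil), and your index-two reduction of the upper bound to the single assertion $\rho\notin\Sigma(L)$ is a clean simplification; it is just heavier machinery than the situation requires. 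Both approaches deliver the theorem; the paper's buys brevity by identifying the component knot types first, while yours is more robust in that it would still work for a hypothetical example whose components happened to be amphichiral.
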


\begin{proof}
The link on the far left in Figure~\ref{fig:new_examples}  is comprised of two components of $9_{32}$, while the link in the center is comprised of knots $9_{32}$ and $9_{32}^*$.  The knot $9_{32}$ has no symmetry, therefore the only possible nontrivial symmetries for these links are $\rho$ and $m\rho$ respectively.  These two links achieve said symmetries through a $180$ degree rotation about a line perpendicular to the page.  

The two components of the link 10n59 in the far right in Figure~\ref{fig:new_examples} are a left- and a right-handed trefoil ($3_1$ and $3_1^*$).  Since the trefoil is not mirrorable, we cannot mirror link 10n59 without permuting components, and vice-versa.    Therefore its symmetry group is a subgroup of $\s{8}{7} = \langle i_2, m\rho \rangle$.  The link exhibits the $m\rho$ symmetry through a  $180$ degree rotation about a line perpendicular to the page; Figure~\ref{fig:10n59} shows it also admits the symmetry $i_2$.
Thus, $\Sigma(10n59) = \s{8}{7}$. \qedhere
\begin{figure}[ht]
\includegraphics[height=3.0in]{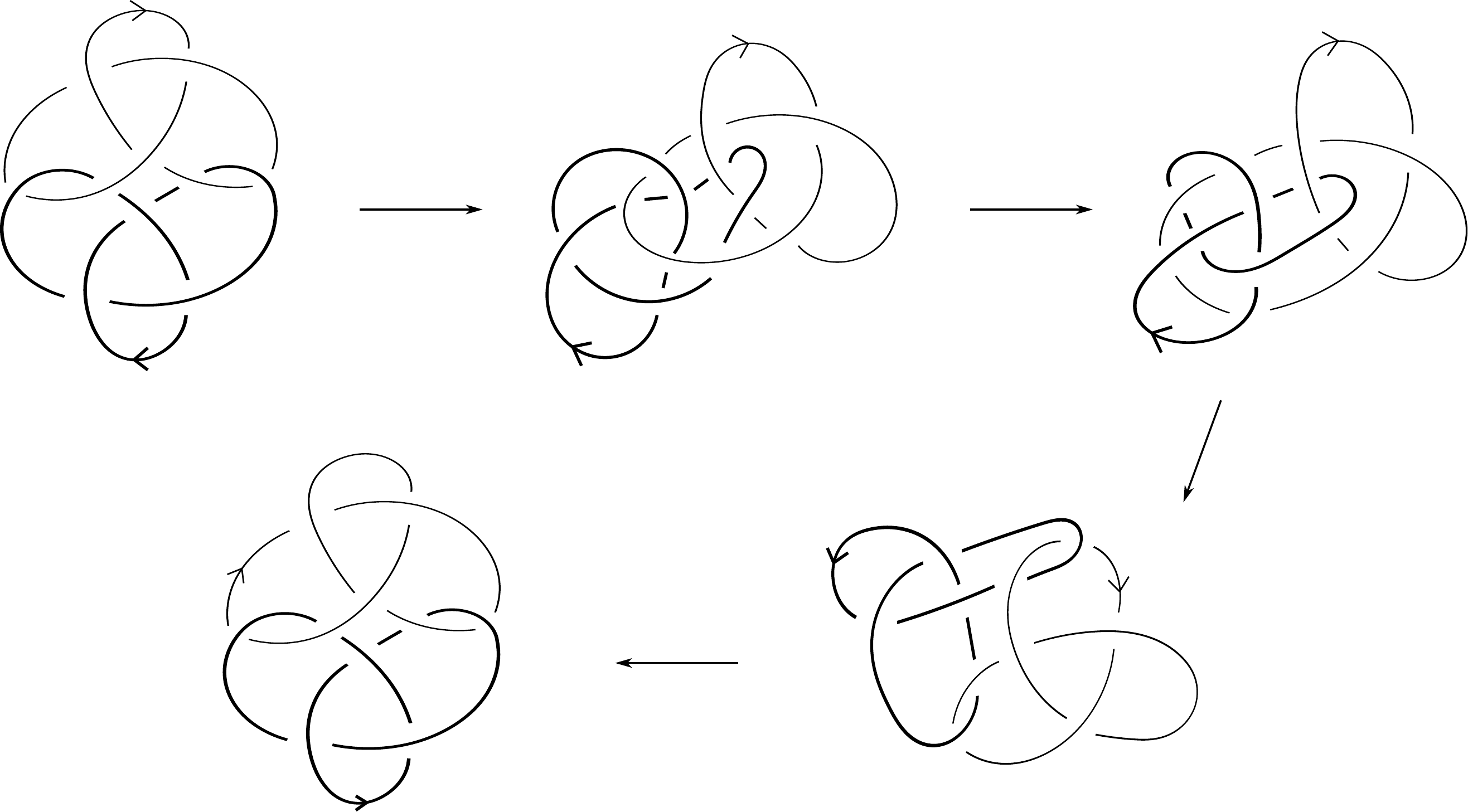}
\caption{An isotopy diagram showing that link 10n59 admits symmetry $i_2$ -- we may reverse the orientation of the second component.
\label{fig:10n59}}
\end{figure}

\end{proof}

\section{Intrinsic symmetry groups of alternating links} \label{sec:alt}

Alternating, nonsplit two-component links only have 12 possible intrinsic symmetry groups; 15 groups will never occur when the link is alternating.  To understand why, we will utilize a property of Conway polynomials \cite{MR0258014}.  Though this result is classically known, we provide a proof here for completeness.  

\begin{lemma}
Let $L$ be a link with $c$ components.  Consider a symmetry $\gamma=(-1,\varepsilon,\varepsilon,...,\varepsilon ,p)$ that reverses the orientation of either all or no components of $L$.  Then, the Conway polynomial of $L$ changes sign according to the formula  $\nabla(L^\gamma)=(-1)^{c+1}\nabla(L)$.
\end{lemma}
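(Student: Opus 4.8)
The plan is to reduce everything to three standard properties of the Conway polynomial $\nabla_L(z)$ (normalized by $\nabla(\text{unknot}) = 1$ and the skein relation $\nabla(L_+) - \nabla(L_-) = z\,\nabla(L_0)$) and then simply read off the sign. The three facts I would invoke are: (i) $\nabla$ is unchanged under reordering the components and under reversing the orientations of all components simultaneously; (ii) the mirror formula $\nabla_{L^*}(z) = \nabla_L(-z)$; and (iii) the parity statement that, for a $c$-component link, every exponent occurring in $\nabla_L(z)$ is $\equiv c-1 \pmod 2$.

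First I would record that each of (i)--(iii) follows from the skein relation by an induction on the usual resolution-complexity of a diagram. For (i), a simultaneous reversal of all components preserves the sign of every crossing and fixes the unknot, so it sends each skein triple to a skein triple; reordering is invisible to the skein relation entirely. For (ii), passing to the mirror interchanges $L_+$ and $L_-$ while fixing the normalization, so $\nabla_{L^*}(z)$ obeys the skein recursion with $z$ replaced by $-z$. For (iii), resolving a crossing changes the component count by exactly one, so $L_0$ has parity opposite to that of $L_\pm$, and the extra factor $z$ then propagates the common parity $c-1$ inductively.

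With these in hand, the lemma falls out. Because $\gamma = (-1,\varepsilon,\dots,\varepsilon,p)$ has $\epsilon_0 = -1$ and all $\epsilon_i$ equal, equation~\eqref{eq:Lgamma} identifies $L^\gamma$, up to reordering its components, as $L^*$ when $\varepsilon = +1$ and as $(-L)^*$ when $\varepsilon = -1$. By (i) both of these have the same Conway polynomial as $L^*$, and then (ii) followed by (iii) yields
\begin{equation*}
\nabla(L^\gamma) = \nabla_{L^*}(z) = \nabla_L(-z) = (-1)^{c-1}\nabla_L(z) = (-1)^{c+1}\nabla(L).
\end{equation*}

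The step I expect to require the most care is the parity statement (iii): it is precisely what converts the substitution $z \mapsto -z$ produced by the mirror into the clean global sign $(-1)^{c+1}$, and it is the only place where the hypothesis on the number of components $c$ genuinely intervenes. The reversal- and relabeling-invariance in (i) are routine, and the mirror formula (ii) is the familiar observation that passing to the mirror exchanges over- and under-crossings.
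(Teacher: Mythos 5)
Your proof is correct, but it takes a genuinely different route from the one in the paper. The paper argues by induction on the number of components $c$: it takes one component $K$, uses the skein relation to change all of the crossings between $K$ and the rest of the link until $K$ is split off (where $\nabla$ vanishes), and observes that each resolved intercomponent crossing contributes a term $\pm z\,\nabla(L_{0,j})$ whose sign flips under $\gamma$ while the $(c-1)$-component resolutions $L_{0,j}$ pick up $(-1)^{c}$ by the inductive hypothesis; the two signs combine to give $(-1)^{c+1}$. You instead factor the statement through two classical structural facts, the mirror formula $\nabla_{L^*}(z)=\nabla_L(-z)$ and the degree-parity statement that $\nabla_L$ is supported in degrees congruent to $c-1 \bmod 2$, together with invariance under relabeling and total reversal to identify $\nabla(L^\gamma)$ with $\nabla(L^*)$. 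Both arguments are sound and both ultimately rest on skein-relation inductions; yours has the advantage of isolating exactly where the component count enters (the parity of the exponents under $z\mapsto -z$) and of handling the permutation $p$ explicitly via relabeling invariance, which the paper's proof leaves implicit. It is worth noting that the paper's base case $c=1$, which it cites as ``well known,'' is itself most easily established by your facts (ii) and (iii) specialized to knots, so in some sense your argument generalizes the base case directly rather than bootstrapping from it. The paper's induction, on the other hand, is more self-contained in that it needs only the vanishing of $\nabla$ on split links rather than the full parity statement.
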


\begin{proof}
We shall proceed by induction on the number of components.  For knots ($c=1$), this is well known.  Let $\gamma$ be defined as above.  Let $K$ be one of the components of $L$.  Resolve each of the $n$ overcrossings that $K$ has with the other components of $L$ according to the skein relation of the Conway polynomial; similarly, resolve the corresponding $n$ undercrossings in $L^\gamma$.  As this will turn every crossing $K$ has with $L$ or $L^\gamma$ into over or under crossings, we eventually get down to split links $L-K$ and $L^\gamma-K$.  We obtain
\begin{equation*}
\nabla(L)=(-1)^{i_1}z\nabla(L_{0,1})+(-1)^{i_2}z\nabla(L_{0,2})+...+(-1)^{i_n}z\nabla(L_{0,n})+\nabla(L-K)
\end{equation*}
and
\begin{equation*}
\nabla(L^\gamma)=(-1)^{i_1+1}z\nabla(L_{0,1}^\gamma)+(-1)^{i_2+1}z \nabla(L_{0,2}^\gamma)+...+(-1)^{i_n+1}z\nabla(L_{0,n}^\gamma)+\nabla(L^\gamma-K)
\end{equation*}
where $i_j = 0$ or $1$  if the $j$th over crossing of $K$ in $L$ was a $+1$ crossing or a $-1$ crossing, respectively. 

 As $L-K$ and $L^\gamma-K$ are split, $\nabla(L-K)=\nabla(L^\gamma-K)=0$.  Moreover, every crossing we resolved was an intercomponent crossing, so $L_{0,j}$ is a link with $c-1$ components for every $j$; therefore by the inductive hypothesis, $\nabla(L_{0,j})=(-1)^c\nabla(L_{0,j}^\gamma)$.

We thus have
\begin{align*}
\nabla(L)&=(-1)^{i_1+c}z\nabla(L_{0,1}^\gamma)+(-1)^{i_2+c}z \nabla(L_{0,2}^\gamma)+...+(-1)^{i_n+c}z\nabla(L_{0,n}^\gamma)\\
&=(-1)^{c+1}\nabla(L^\gamma) 	\qedhere
\end{align*}
\end{proof}

This lemma rules out four possible symmetries for nonsplit, alternating two-component links $L$.  Recall that the Conway polynomial does not vanish for nonsplit, alternating links.  Thus such a link $L$ cannot admit a symmetry of the form $(-1,\varepsilon,\varepsilon, p)$.  Only 12 subgroups of $\Gamma_2$ fail to contain one of the four elements of this form; the other 15 groups cannot be symmetry groups for an alternating two-component link, which proves the following proposition.

\begin{proposition}
Only 12 intrinsic symmetry groups are possible for alternating, nonsplit two-component links.  These 12 groups may be viewed as a subdiagram of Figure~\ref{fig:subgroup_lattice}:  they form the order ideal generated by two groups, $\s{8}{1}$ and $\s{8}{2}$, i.e., these two groups and all of their subgroups.  
\end{proposition}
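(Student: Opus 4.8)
The plan is to convert the analytic obstruction supplied by the preceding lemma into a purely group-theoretic containment statement, and then to read off the count from the subgroup lattice. First I would specialize the lemma to two-component links, where $c = 2$, so that any symmetry of the form $\gamma = (-1,\varepsilon,\varepsilon,p)$ satisfies $\nabla(L^\gamma) = -\nabla(L)$. If $\gamma \in \Sigma(L)$, then $L$ is isotopic to $L^\gamma$, so $\nabla(L) = \nabla(L^\gamma) = -\nabla(L)$ and hence $\nabla(L) = 0$. Since the Conway polynomial of a nonsplit alternating link is nonzero, no such $\gamma$ can lie in $\Sigma(L)$. This rules out exactly the four elements $F = \{m, m\PI, m\rho, mb\}$.

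The key step is to identify $F$ inside $\Gamma_2$. Writing $\gamma = (\varepsilon_0,\varepsilon_1,\varepsilon_2,p)$, the condition defining $F$ is $\varepsilon_0 = -1$ together with $\varepsilon_1 = \varepsilon_2$, which is equivalent to $\varepsilon_0 = -1$ and $\varepsilon_0\varepsilon_1\varepsilon_2 = -1$. Thus $\gamma \in F$ precisely when $\gamma$ lies in neither of the two index-two subgroups $\s{8}{1} = \{\varepsilon_0 = 1\}$ (the ``No $m$'' subgroup) and $\s{8}{2} = \{\varepsilon_0\varepsilon_1\varepsilon_2 = 1\}$; equivalently $F = \Gamma_2 \setminus (\s{8}{1} \cup \s{8}{2})$. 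Consequently the condition $\Sigma(L) \cap F = \emptyset$ is the same as $\Sigma(L) \subseteq \s{8}{1} \cup \s{8}{2}$.

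Now I would invoke the elementary fact that a group is never the union of two proper subgroups: from $\Sigma(L) = (\Sigma(L) \cap \s{8}{1}) \cup (\Sigma(L) \cap \s{8}{2})$ it follows that $\Sigma(L) \subseteq \s{8}{1}$ or $\Sigma(L) \subseteq \s{8}{2}$. Hence every admissible symmetry group lies in the order ideal generated by $\s{8}{1}$ and $\s{8}{2}$. Because both subgroups are normal (each has index two), $F$ is a union of cosets and is conjugation-invariant, so this allowed/forbidden dichotomy passes correctly to the $27$ conjugacy classes.

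Finally I would count the conjugacy classes in this order ideal against Figure~\ref{fig:subgroup_lattice}. Listing the classes lying below $\s{8}{1}$ and $\s{8}{2}$ yields the trivial group; the four order-two classes $\s{2}{1}, \s{2}{4}, \s{2}{5}, \s{2}{7}$; the five order-four classes $\s{4}{1}, \dots, \s{4}{5}$; and the two order-eight groups $\s{8}{1}, \s{8}{2}$ themselves, for a total of $1 + 4 + 5 + 2 = 12$. The only genuinely delicate point is the middle step --- translating the four-element avoidance condition into containment in one of two index-two subgroups; once the union-of-subgroups observation is in place, the remainder is bookkeeping in the lattice.
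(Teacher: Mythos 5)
Your argument is correct, and it uses the same engine as the paper: the Conway polynomial lemma specialized to $c=2$, combined with the nonvanishing of $\nabla$ for nonsplit alternating links, to forbid the four elements $(-1,\varepsilon,\varepsilon,p)$. Where you diverge is in the combinatorial step. The paper simply checks the $27$ conjugacy classes against the four forbidden elements and reports that $12$ survive, asserting (by inspection of the lattice) that these form the order ideal generated by $\s{8}{1}$ and $\s{8}{2}$. You instead observe that the forbidden set is exactly $\Gamma_2 \setminus (\s{8}{1} \cup \s{8}{2})$, since ``$\varepsilon_0=-1$ and $\varepsilon_1=\varepsilon_2$'' is the simultaneous failure of the two index-two conditions $\varepsilon_0=1$ and $\varepsilon_0\varepsilon_1\varepsilon_2=1$, and then apply the fact that no group is the union of two proper subgroups to conclude $\Sigma(L)\le\s{8}{1}$ or $\Sigma(L)\le\s{8}{2}$. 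This buys something the paper's proof does not: an a priori explanation of \emph{why} the answer is an order ideal generated by exactly two groups, rather than a post hoc verification; your normality remark also correctly handles the passage to conjugacy classes (both subgroups are kernels of homomorphisms to $\mathbf{Z}_2$, so the forbidden set is conjugation-invariant). The final count of $1+4+5+2=12$ classes matches the lattice in Figure~\ref{fig:subgroup_lattice}.
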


Alternating examples appear in Table~\ref{t:examples} for all but one of these 12 groups; the exception is $\s{4}{4}$, where no example, alternating or not, is known.


\section{Future directions}
We have found prime, nontrivial examples for 21 of the 27 possible intrinsic symmetry groups of two-component links..  Do examples exist for the 6 remaining subgroups?  
In considering links of 14 or fewer crossings, our exhaustive approach has not captured the full complexity of possible link structures.  It seems possible that some of the missing groups, especially the smaller ones, will appear among links of 20 or fewer crossings.  For instance, examples for three groups ($\s{4}{8}, \s{4}{10}, \s{4}{11}$) appeared first for 14-crossing links; perhaps examples of links with the three missing groups of order four will appear `soon' in the tables.  In particular, we conjecture that there is an alternating link with symmetry group $\s{4}{4}$, which would complete the set of examples for alternating, nonsplit two-component links.

More interesting is the search for nonsplit links with full symmetry.  The unlink with $\mu$ components possesses full symmetry; to date no nonsplit examples are known.  A two-component link $L$ with symmetry group $\Sigma(L) = \Gamma_2$ must have linking number zero and be nonalternating; its two knot components must be of the same knot type, which itself must possess full symmetry. We are compelled to put forth the following conjecture.

\begin{conjecture}
There does not exist a prime, nonsplit two-component link with full symmetry.
\end{conjecture}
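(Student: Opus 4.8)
The plan is to treat the statement as a rigidity problem: assume such a link $L$ exists, extract as much structure as possible from its full symmetry, and force it to be split or composite. I would begin by assembling the necessary conditions into a single normal form. Since $\Gamma_2$ contains the one-component inversion $i_1 = (1,-1,1,e)$, admitting it gives $\operatorname{lk}(L) = -\operatorname{lk}(L)$, so $\operatorname{lk}(L) = 0$. Since $\Gamma_2$ contains the mirror $m = (-1,1,1,e)$, which is of the form $(-1,\varepsilon,\varepsilon,p)$ governed by the Conway-polynomial Lemma, admitting $m$ gives $\nabla(L^m) = (-1)^{2+1}\nabla(L) = -\nabla(L)$, while $L^m \sim L$ gives $\nabla(L^m) = \nabla(L)$; hence $\nabla(L) = 0$. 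Admitting $\rho$ forces the two components to share a common knot type $K$, and admitting $m$ and $i_1$ forces $K$ to be amphichiral and invertible. Together with the alternating Proposition, $L$ is nonsplit, nonalternating, has vanishing linking number and Conway polynomial, and has two identical fully amphichiral, invertible components.

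The second step is to promote these algebraic symmetries to an honest finite group action on the pair $(S^3,L)$. Since $L$ is prime and nonsplit its complement is irreducible with incompressible boundary, so geometrization applies. If $L$ is hyperbolic, Mostow rigidity identifies $\Sym(L) = \MCG(S^3,L)$ with a subgroup of the finite isometry group of $S^3 \setminus L$, and these isometries extend across the cusps to a finite group $G \subset \operatorname{Diff}(S^3,L)$ surjecting onto $\Sigma(L) = \Gamma_2$. If $L$ is not hyperbolic it is a torus link or a nontrivial satellite, and I would route the argument through the JSJ decomposition, whose canonical tori are permuted by every symmetry. Torus links are eliminated immediately, as the only two-component torus link with vanishing linking number is the split unlink. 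For satellites the plan is to show that the companionship data is preserved by $G$ and that the exchange $\rho$ together with $\operatorname{lk}(L)=0$ is incompatible with a nontrivial companion without producing a connected-sum decomposition, contradicting primeness.

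The heart of the argument is the hyperbolic case with its genuine action. By the resolution of the Smith conjecture and the geometrization of finite group actions on $S^3$, $G$ is conjugate to a subgroup of $O(4)$ acting linearly, and $L$ is a $G$-invariant link. I would concentrate first on the involution realizing $\rho$: an orientation-preserving linear involution of $S^3$ is either free (conjugate to the antipodal map, quotient $\mathbf{RP}^3$) or a rotation with unknotted fixed circle $A$ disjoint from $L$. In the rotation case $L$ sits in the solid torus $S^3 \setminus A$ as a link invariant under a $\pi$-rotation about the core that interchanges its components, and simultaneously under the linear realizations of $m$, $i_1$, $i_2$; the goal is to isotope $L$ into a standard symmetric position from which primeness fails. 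In the free case $L$ descends to a knot in $\mathbf{RP}^3$ whose double cover is $L$, and one analyzes whether that configuration, also invariant under the induced mirror and inversions, can be prime and nonsplit.

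The step I expect to be the genuine obstacle — and the reason the statement is posed as a conjecture — is the final classification of invariant links under these orthogonal $\Gamma_2$-actions. Neither $\operatorname{lk}(L)=0$ nor $\nabla(L)=0$ is strong enough on its own: the Whitehead link $\s{8}{1}$ example ($5a1$) satisfies both yet attains only the order-eight group, so the true obstruction to the remaining symmetries lives beyond classical abelian invariants. I would therefore expect to need a finer equivariant invariant adapted to the exchange involution — an equivariant signature, a Casson--Gordon or twisted-coefficient invariant, or Milnor $\bar\mu$-invariants forced simultaneously to vanish and to be nonzero — to separate \emph{prime and nonsplit} from \emph{admits all sixteen symmetries}. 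Producing such an invariant and proving it never vanishes for a prime nonsplit link is the crux; absent it, the census in Table~\ref{tab:conj}, with no fully symmetric link among the $77{,}036$ hyperbolic examples, is the strongest evidence currently available.
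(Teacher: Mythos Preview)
The statement is a \emph{conjecture}: the paper offers no proof, only the remarks preceding it that a fully symmetric two-component link must have linking number zero, be nonalternating, and have two components of the same fully symmetric knot type. There is therefore nothing in the paper to compare your argument against beyond those necessary conditions, which you recover correctly in your first paragraph (and you add $\nabla(L)=0$, which does follow from the Conway-polynomial lemma once $m\in\Sigma(L)$).

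Your proposal is candid about not being a proof: you outline a geometrization/equivariant strategy and then explicitly flag the endgame --- classifying $\Gamma_2$-invariant links under an orthogonal action and producing a finer invariant that obstructs full symmetry --- as the missing ingredient. That assessment is accurate; nothing in the paper supplies that ingredient either, which is exactly why the statement is posed as a conjecture.

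One small correction in your supporting discussion: the Whitehead link $5a1$ does have $\operatorname{lk}=0$, but it is alternating and its Conway polynomial is $\pm z^3 \neq 0$, so it does \emph{not} satisfy ``both'' conditions as you claim. This does not affect your larger point that $\operatorname{lk}=0$ and $\nabla=0$ alone are too weak to force splitness or compositeness, but you should choose a different witness (any nonalternating link with $\Sigma(L)<\Gamma_2$ and $\nabla=0$ would do).
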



Another project is to extend our catalog of the relative frequencies of different intrinsic symmetry groups.  To date, we have only approached hyperbolic two-component links of 14 or fewer crossings; raising the number of components is an obvious next step.  However, searching for the intrinsic symmetry groups of links with more components becomes more daunting, as the number of such groups rises considerably; see Table~\ref{t:number}.  Few examples of intrinsic symmetry groups are known for $\mu$-component links for $\mu \geq 3$; we found a handful in \cite{intrinsicsym}. However, using {\tt SnapPy}, searching for examples and developing a catalog of frequencies seems like an approachable problem.

\begin{table}[ht]
\begin{center}
\begin{tabular}{crrr}
\toprule
&&& \# subgroups  \\
$\mu$ & $\left|\Gamma_\mu\right|$ & \# subgroups & (up to conjugacy) \\
\midrule
1 &      4        &       5    & 5 \\
2  &    16       &       35 & 27                     \\
3  &     96      &       420 & 131                 \\
4  &     768    &     9417 &    994           \\
5  &     7680  &      270131 &    6382     \\
\bottomrule
\end{tabular}
\medskip
\caption[Number of subgroups of $\Gamma_\mu$; this equals the number of symmetry groups for a $\mu$-component link.] {The number of subgroups of $\Gamma_\mu$, as computed by {\tt Magma}.  Each one represents a different intrinsic symmetry group possible for a $\mu$-component link.  
}
\label{t:number}
\end{center}
\end{table}

\section*{Acknowledgements}
We are grateful to all the members of the UGA Geometry VIGRE group who contributed their time and effort over many years towards our original intrinsic symmetry project.  Jeremy Rouse assisted with our {\tt Magma} computations of subgroups of $\Gamma_\mu$.  Our work was supported by the UGA VIGRE grants DMS-07-38586 and DMS-00-89927 and by the UGA REU site grant DMS-06-49242.

\bibliographystyle{plain}
\bibliography{examplepaper,drl}
\end{document}